\newcommand{\be}{\begin{equation}}
\newcommand{\ee}{\end{equation}}
\newcommand{\bd}{\begin{displaymath}}
\newcommand{\ed}{\end{displaymath}}
\newcommand{\eps}{\varepsilon}
\newcommand{\bb}{\mathbf{b}}
\def\bbc#1#2{{\rm \mkern#2mu\vbar\mkern-#2mu#1}}
\def\bbb#1{{\rm     I\mkern-3.5mu      #1}}      \def\bba#1#2{{\rm
		#1\mkern-#2mu\fudge #1}}
\def\bb#1{{\count4=`#1 \advance\count4by-64  \ifcase\count4\or\bba
		A{11.5}\or \bbb B\or\bbc C{5}\or\bbb D\or\bbb E\or\bbb F \or\bbc
		G{5}\or\bbb H\or \bbb I\or\bbc J{3}\or\bbb K\or\bbb  L  \or\bbb
		M\or\bbb N\or\bbc O{5} \or \bbb P\or\bbc Q{5}\orrrr\b bb
		R\or\bbc S{4.2}\or\bba T{10.5}\or\bbc   U{5}\or      \bba
		V{12}\or\bba W{16.5}\or\bba X{11}\or\bba Y{11.7}\or\bba
		Z{7.5}\fi}}
\def\rat{\hbox{{\rm Q}\kern-.70em\hbox{{\rm I} } }}
\def \R {\bbb R}
\newtheorem{theorem}{Theorem}
\newtheorem{lemma}{Lemma}[section]
\newtheorem{proposition}{Proposition}[section]
\newtheorem{problem}{Problem}
\newtheorem{acknowledgment*}{Acknowledgment}
\newtheorem{remark}{Remark}
\begin{document}
	\title{Is the mailing Gilbert-Steiner problem  convex?}
	\author{Gershon Wolansky \\ Department of Mathematics, Technion, Haifa 32000, Israel}

\maketitle
\begin{center} Abstract \end{center}	A convexification of the mailing version of the  finite Gilbert problem for optimal networks is introduced.  It is ia convex functional  on the set of probability measures subject to the Wasserstein $p-$ metric. 
The minimizer of this convex functional is a measure supported in a graph. If this graph is a tree (i.e contains no cycles)  then this tree is also a minimum of the corresponding mailing Gilbert problem.  The convexification of the Steiner problem is the limit of these convexified Gilbert's problems in the limit $p\rightarrow\infty$. 
A numerical algorithm for the implementation of the convexified Gilbert-mailing problem is also suggested, based on entropic regularization. 

	\section{Introduction}
	Optimal branched transportation is a variant of the theory of Monge-Kantorovich \cite{[K], vi} on optimal transportation. 
	  The classical Kantorovich cost of transporting a probability measure $f^+(dx)$ to another probability measure $f^-(dy)$, where the cost of transporting one unit from $x$ to $y$ is $c(x,y)$,  is given by
	  \be\label{MKan} \min_{\pi\in\Pi(f^+, f^-)} \int\int c(x,y)\pi(dxdy)\ee
	  where
	 $\Pi(f^+, f^-)$ stands for the set of all {\em transport plans} $\pi$, which are 2-points probability measures whose marginals are $f^\pm$ \cite{[V], vi}. 
	   In contrast to the classical Monge-Kantorovich theory where each of the "mass particles" is transported independently of the others,   in the  branched transport (as well as a congestion transport)    the particles  are assumed to interact  with each other while moving from a source to a target distribution.
	Thus, while Monge-Kantorovich optimal transport (\ref{MKan}) is, basically, a linear programming in the affine space of 
	transport plans  given by probability measures (even though it is highly nonlinear in the domain of {\em deterministic} transport plans), a branched (and congested \cite{BJO, Cari}) transport is a minimizer of nonlinear functional even on the affine space of probability measures. Moreover, while the functional corresponding to congestion transport is, in general, a convex one, the branched optimal transport theory  involves a minimization of  non-convex functionals. In particular the optimal solution is not unique, in general, and the computation task of optimal transport is challenging  \cite{OS, YJCL}.  
	\par
	The cost function of a branched transport  induces  ramifications in a natural way.  It simulates many natural phenomena  such as roots systems of trees, 
	leaf ribs and the nervous system, as well as  supply-demand distribution networks such as irrigation networks and 
	electric power supply. The common principle behind all these cases is that the 
	the cost functions privilege large flows over diffusive ones, which causes the orbits of the transport  to concentrate on 1-dimensional currents. 
\par
One of  first models for   branched  transport was introduced by
Gilbert \cite{Gil} (see also \cite{zang}). Given source  ${{{f^+}}}$ and target $f^-$ probability measures on $\R^d$  supported each  in  a {\em finite set   of points},  
he   considered optimization over
finite directed  tree $T=(E,V)$ whose vertices $V$ contain  $supp({{{f^+}}})\cup supp(f^-)$.    Gilbert  suggested to minimize 
\be\label{Gil} \sum_{e\in E(T)}w^\sigma(e)|e|\ee
over {\em all} such trees  $T$ and weight functions  $w:E(T)\rightarrow \R^+$ representing the fluxes at  the edges. Here $|e|$ is the length of an edge $e$ and $w$  satisfies
\be\label{const} \sum_{e\in E^\pm(v)} w(e)=f^\pm(v) \ , v\in supp(f^\pm) \ ; \ \ \sum_{e\in E^+(v)}w(e)= \sum_{e\in E^-(v)}w(e) \ \ v\in V-(supp({{{f^+}}})\cup supp(f^-)) \ee
 where $E^+(v)$ ($E^-(v)$) stands for all edges outgoing  from (ingoing to)  $v$. 
  The exponent $\sigma$ is chosen to be a number in the interval $[0,1)$, reflecting the preference for concentration of the flow, due to the inequality $w_1^\sigma(e)+w_2^\sigma(e)\geq \left(w_1(e)+w_2(e)\right)^\sigma$. This choice of $\sigma$ is the source of non-convexity (and non-uniqueness) of this problem. In particular, the case $\sigma=0$ corresponds to the  Steiner problem of minimal graphs.


 In 2003  Xia \cite{Xia1} (see also \cite{Xia2}  and references therein) 
extended this model to a continuous framework using Radon vector-valued measures $u$ in on $M$ which satisfy the  condition $\nabla\cdot u= {{{f^+}}}-f^-$ in sense of distributions. This is a weak formulation of (\ref{const}). The optimal transport network $u$ is obtained by minimizing
$$ {\cal M}^\sigma(u):= \int_M \theta^\sigma(u) d{\cal H}^1 $$ 
over all $u$ satisfying the above constraint, 
where  $\theta(u)_{(x)}$ is the corresponding occupation density of $u$ at $x$ (which is a weak formulation of (\ref{Gil})). Such an extension makes sense for general Borel probability measures, and is reduced to Gilbert's formulation (\ref{Gil}, \ref{const}) if $f^\pm$ are supported on a finite set of points. One of the fundamental results of Xia is the condition 
 $\sigma> 1-1/d$ which guarantees a finite value of the transport cost  for {\em any} Borel measures $f^\pm$  in $\R^d$ representing the source and sink distributions (see also \cite{CDRM, CDRM1}). 

Another approach
 \cite{BCM, MSM} represents  traffic plans as measures on the set of Lipschitz
paths connecting the source and sinks. The functional then acts on probability measures  on this set.   This approach is equivalent (in Euler representation) to Xia formulation (see \cite{BCM, CDRM}), and it can also accommodate  the mailing problem. 

In another approach,  (see \cite {BSp} and references therein), a transportation network is modeled as a connected set $\Sigma\subset\R^n$, and the linear Monge-Kantorovich problem is introduced to the metric $d_\Sigma(x,y)=d(x,y)\wedge(dist(x, \Sigma)+dist(y, \Sigma))$ as follows:  Given source and target distributions $f^{\pm}$, the problem is reduced to minimizing the Kantorovich cost
$$ \Sigma \Rightarrow\min_\pi\int \int d_\Sigma(x, y)\pi(dxdy)$$
among all transport plans $\pi$ whose marginals are prescribed by $f^{\pm}$, and all connected sets $\Sigma$ whose length ${\cal H}_1(\Sigma)$ are 
bounded by a prescribed  value $L>0$.  The non linearity (and non-convexity) is conveyed in the dependence on $\Sigma$.  This non linearity  persists also in the mailing version, i.e. for minimizing $\int\int d_\Sigma(x,y)\pi(dxdy)$ with respect to $\Sigma$, where a transport  plan $\pi$ is prescribed (rather than its marginals $f^{\pm}$). 

Another
approach (see e.g.\cite{BBS}) extend Benamu Brenier \cite{[BB]} kinetic formulation of optimal transport to this setting. This formulation turns out to be equivalent to Xia's approach.   It seems, however, that the mailing problem cannot be accommodated in this setting. 

Still another approach using a limit theorem was introduced   the author  in \cite{W4}.  

\noindent
All in all, these approaches share the non-convex structure and, as a result, do not guarantee  a  unique solution, in general. 
\section{Formulation of the mailing problem}
In the {\em mailing problem} (or "who goes where" situation \cite{Xia2, BCM, BCM1, CDRM1}),  a transport  plan $\pi$ is prescribed, determining the "mailing address" in the support of $f^{-}$ for  each point in the support of $f^{+}$.

Let us consider first the discrete version of the Gilbert mailing problem. Suppose 
 $A= supp(f^+), B=supp(f^-)$ be finite sets in $\R^d$. {\em The mailing program} from $f^+$ to $f^-$ is a non-negative function $\pi=\pi(x,y)$ on $A\times B$ which satisfies
 \be\label{balgpm} \sum_{x\in A} \pi(x,y)=f^-(y), \ \ \  \sum_{y\in B} \pi(x,y)=f^+(x)\ee
 
 We view $\pi(x,y)$ as the flux of mass from $x\in A$ to $y\in B$.

	A network $T$ in the class $(A,B, \pi)$ is a tree embedded in $\R^d$. This tree is a union of a finite number of segments $e$, called edges.  
	
	If $x\in A$, $y\in B$ and $\pi(x,y)>0$ then there exists a connected component $o_T(x,y)\subset T$ such that $x, y\in o_T(x,y)$.  We refer to $o_T(x,y)$ as the {\em orbit} from $x$ to $y$ along the tree $T$. In particular we may view $o_T(x,y)$ as a subset of the edges $e$ composing $T$.  By definition of a tree, there exists {\em at most one} connected orbit connecting each $x,y\in T$.  In particular, there exists a {\em unique} orbit connecting $x\in A$ to $y\in B$ provided $\pi(x,y)>0$. 

For any edge $e\in T$ let
	\be\label{mudef} w_\pi(e):=\sum_{x\in A, y\in B;  o_T(x,y)\ni e} \pi(x,y) \ . \ee
The {\em length} of an edge $e\in T$ is denoted by $|e|$. 
\begin{remark}
	$w_\pi$ satisfies the Kirchhoff's condition (\ref{const}) on the tree $T$. 
\end{remark}
\begin{problem}\label{gilnocon}
	Let $\sigma\in [0, 1)$. 
	The Gilbert mailing problem associated with $(A,B,\pi)$ is:
	
	Minimize 
	$$G(T;\pi):= \sum_{e\in T} w_\pi(e)^\sigma  |e|$$ 
	among all trees $T$ in the class $(A,B,\pi)$ 
\end{problem}
\begin{proposition}
	The discrete Gilbert's problem (\ref{Gil}) is obtained by minimizing $G(\pi):= \min_TG(T,\pi)$ over all plans $\pi$ satisfying (\ref{balgpm}). 
\end{proposition}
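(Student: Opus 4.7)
The plan is to prove equality of the two infima by two matching inequalities. Write $G^\star := \inf_\pi G(\pi)$ and let $G_{\text{Gil}}$ denote the infimum in (\ref{Gil}) taken over all pairs $(T,w)$ satisfying (\ref{const}). The easy inequality $G^\star \geq G_{\text{Gil}}$ follows directly from the remark preceding Problem \ref{gilnocon}: for any admissible $\pi$ and any tree $T$ in the class $(A,B,\pi)$, the induced weight $w_\pi$ defined in (\ref{mudef}) satisfies the Kirchhoff balance (\ref{const}), so $(T,w_\pi)$ is feasible for (\ref{Gil}) and $G(T;\pi) \geq G_{\text{Gil}}$; taking infima first over $T$ and then over $\pi$ gives the inequality.

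For the reverse inequality, I would take any feasible pair $(T^*,w^*)$ for (\ref{Gil}) and construct a plan $\pi : A\times B \to \R^+$ satisfying (\ref{balgpm}) with the property that $w_\pi = w^*$ as functions on the edges of $T^*$. Once such a $\pi$ is obtained, $T^*$ lies in the class $(A,B,\pi)$ and $G(T^*;\pi) = \sum_e (w^*(e))^\sigma |e|$, so $G^\star \leq \sum_e (w^*(e))^\sigma |e|$; optimizing over $(T^*,w^*)$ then yields $G^\star \leq G_{\text{Gil}}$.

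The construction of $\pi$ is a standard greedy flow-decomposition on the directed tree $T^*$. At each step, pick $x \in A$ with positive residual supply and follow the unique directed path in $T^*$ starting at $x$ along edges of positive residual flow; by flow balance at internal vertices, this path cannot terminate inside the tree and must reach some $y \in B$ with positive residual demand. Let $\delta$ be the minimum of the residual supply at $x$, the residual demand at $y$, and the residual flows along the path; increment $\pi(x,y)$ by $\delta$ and decrement the residuals accordingly. Each iteration exhausts at least one constraint (a source, sink, or edge), so the procedure terminates in finitely many steps, and the Kirchhoff condition (\ref{const}) forces all residual supplies, demands and edge flows to vanish simultaneously.

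The main delicate point — rather than a genuine obstacle — is the consistency between this greedy decomposition and the orbit definition (\ref{mudef}). This is exactly where the tree hypothesis on $T^*$ matters: for each $(x,y)\in A\times B$ with $\pi(x,y)>0$ there is a unique orbit $o_{T^*}(x,y)$, so the directed path followed by the algorithm from $x$ to $y$ must coincide with $o_{T^*}(x,y)$. Summing the increments of $\pi$ over all pairs whose orbit traverses a given edge $e$ then reproduces exactly $w^*(e)$, which verifies $w_\pi = w^*$ and closes the argument.
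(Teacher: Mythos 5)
The paper states this proposition without any proof at all --- it is treated as evident and the text moves directly on to the budget reformulation in Problem \ref{congil} --- so there is no argument of the author's to compare yours against. Your two-inequality scheme is the natural way to justify the claim, and both halves are sound: the direction $G^\star \geq G_{\text{Gil}}$ is indeed immediate from the Remark that $w_\pi$ satisfies (\ref{const}), and the reverse direction via greedy path decomposition of a feasible flow $(T^*,w^*)$ is the standard argument, with the tree hypothesis correctly invoked to identify each decomposed path with the unique orbit $o_{T^*}(x,y)$ so that the increments resum to $w^*(e)$. The one point you should make explicit is the reading of (\ref{const}) your decomposition relies on: you need the full divergence condition at every vertex (net outflow equals $f^+(v)-f^-(v)$), i.e.\ no unconstrained inflow at sources or outflow at sinks; as literally written, (\ref{const}) only pins down $\sum_{e\in E^+(v)}w(e)$ at sources and $\sum_{e\in E^-(v)}w(e)$ at sinks, and without the stronger reading your path tracing could terminate at a source vertex with leftover inflow. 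Since (\ref{const}) is the discrete analogue of $\nabla\cdot u=f^+-f^-$, this is clearly the intended interpretation, but stating it would close the only gap in an otherwise complete proof of a statement the paper leaves unproved.
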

We now introduce an equivalent formulation of the Gilbert's mailing problem: 

Let  $s(e)\geq 0$ represents  the cost of construction the edge $e$ {\em per unit length}. Thus, the cost of construction of $e$ is just $|e|s(e)$. We impose the limit budget of constructing the network $T$ by
\be\label{consG}\sum_{e\in T} |e|s(e)\leq 1 \ . \ee
The cost of transporting a unit of mass along the edge  depends on the cost of construction of this edge in a monotone decreasing way: the higher the investment in the segment, the easier (and cheaper) is the cost of the transport trough this edge. We presume  that the cost of transporting a unit  mass per unit length in an edge $e$ is $s^{-\alpha}$ where $\alpha >0$.  Thus, the cost of transporting a unit of mass from point $x\in A$ to $y\in B$  trough the network $T$ is $\sum_{e\in o_T(x,y)}|e| s(e)^{-\alpha}$.

 We define the network for the mailing problem as follows: 
\begin{problem}\label{congil}
	Minimize 
$$  H(T, s):= \sum_{x\in A}\sum_{y\in B} \pi(x,y)\sum_{e\in o_T(x,y)} |e|s(e)^{-\alpha}$$
over all trees $T$ in class $(A,B,\pi)$,   and functions $s=s(e)$ on $T$ satisfying the constraint (\ref{consG}). 
\end{problem}
\begin{lemma}
	The mailing Gilbert's problem \ref{gilnocon}  and Problem \ref{congil} are equivalent, provided $\sigma=\frac{1}{\alpha+1}$. 
\end{lemma}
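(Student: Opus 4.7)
The plan is to show that, for fixed $T$, the inner minimization over $s$ in Problem \ref{congil} reduces to a monotone function of $G(T;\pi)$, so that the outer minimization over $T$ in the two problems has the same minimizers.

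First I would rewrite $H(T,s)$ by interchanging the order of summation. Using the definition (\ref{mudef}) of $w_\pi$,
$$H(T,s)=\sum_{e\in T}|e|\,s(e)^{-\alpha}\!\!\sum_{x\in A,y\in B:o_T(x,y)\ni e}\!\!\pi(x,y)=\sum_{e\in T}|e|\,s(e)^{-\alpha}w_\pi(e).$$
Thus Problem \ref{congil} asks, for each tree $T$, to minimize $\sum_{e\in T}|e|s(e)^{-\alpha}w_\pi(e)$ under the linear constraint $\sum_{e\in T}|e|s(e)\leq 1$.

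Next I would perform the minimization in $s$ by Hölder's inequality with conjugate exponents $p=(\alpha+1)/\alpha$ and $q=\alpha+1$. The key factorization is
$$|e|\,w_\pi(e)^{1/(\alpha+1)}=\bigl(|e|\,s(e)\bigr)^{\alpha/(\alpha+1)}\cdot\bigl(|e|\,s(e)^{-\alpha}w_\pi(e)\bigr)^{1/(\alpha+1)},$$
which one checks by combining exponents. Summing over $e\in T$ and applying Hölder gives, with $\sigma=1/(\alpha+1)$,
$$G(T;\pi)=\sum_{e\in T}|e|\,w_\pi(e)^{\sigma}\leq\Bigl(\sum_{e\in T}|e|s(e)\Bigr)^{\!\!1-\sigma}\!\!\Bigl(\sum_{e\in T}|e|s(e)^{-\alpha}w_\pi(e)\Bigr)^{\!\!\sigma}\leq H(T,s)^{\sigma},$$
using the construction-budget constraint (\ref{consG}) for the first factor. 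Equivalently $H(T,s)\geq G(T;\pi)^{1/\sigma}$. The equality case of Hölder requires $|e|s(e)$ to be proportional to $|e|s(e)^{-\alpha}w_\pi(e)$, i.e.\ $s(e)^{\alpha+1}\propto w_\pi(e)$; normalizing by $\sum|e|s(e)=1$ gives the explicit optimizer $s^\ast(e)=w_\pi(e)^{\sigma}/G(T;\pi)$, which saturates the inequality and yields $\min_s H(T,s)=G(T;\pi)^{1/\sigma}$.

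Finally, since $x\mapsto x^{1/\sigma}$ is strictly increasing on $[0,\infty)$, a tree $T$ minimizes $\min_s H(T,s)$ if and only if it minimizes $G(T;\pi)$, which proves the equivalence. The only subtle step is verifying the equality case of Hölder — one should check that the optimizer $s^\ast(e)$ is well-defined (i.e.\ that $w_\pi(e)>0$ on every edge that actually appears in a candidate optimal tree, or else discard degenerate edges from $T$ without changing either functional), after which both the direct Lagrangian computation and the Hölder route yield the same conclusion and the lemma follows.
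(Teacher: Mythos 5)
Your proposal is correct and has the same overall structure as the paper's proof: rewrite $H(T,s)=\sum_{e\in T}w_\pi(e)|e|s(e)^{-\alpha}$ by interchanging sums, minimize over $s$ under the budget constraint for each fixed $T$, obtain $\min_s H(T,s)=G(T;\pi)^{1+\alpha}$, and conclude by monotonicity of $x\mapsto x^{1+\alpha}$. The difference is in how the inner minimization is carried out. The paper uses a Lagrange-multiplier computation: it observes that $s\mapsto H(T,s)$ is strictly convex, sets $-\alpha w_\pi(e)s(e)^{-\alpha-1}+\lambda=0$, solves for $s(e)\propto w_\pi(e)^{1/(1+\alpha)}$, and substitutes back. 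You instead prove the lower bound $H(T,s)\geq G(T;\pi)^{1+\alpha}$ directly via H\"older's inequality with the factorization $|e|w_\pi(e)^{\sigma}=(|e|s(e))^{\alpha/(\alpha+1)}(|e|s(e)^{-\alpha}w_\pi(e))^{1/(\alpha+1)}$, and then exhibit the explicit optimizer $s^\ast(e)=w_\pi(e)^{\sigma}/G(T;\pi)$ saturating it. The H\"older route buys you a genuinely global lower bound without having to argue that the critical point of the Lagrangian is the global constrained minimum, and it makes the equality case (proportionality $s(e)^{\alpha+1}\propto w_\pi(e)$) transparent; it also forces you to confront the degenerate case $w_\pi(e)=0$, which you handle correctly by discarding unused edges, a point the paper glosses over by asserting the minimizer has $s(e)>0$ on every edge. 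Both computations agree, and your final monotonicity step matches the paper's implicit conclusion that $H(T)$ is the $(1+\alpha)$-th power of $G(T;\pi)$.
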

\begin{proof}
	By (\ref{mudef}) we rewrite $H(T,s)$ as
	$$ H(T,s)=\sum_{e\in T} w_\pi(e) |e|s^{-\alpha}(e)$$ Let 
	$$H(T)=\min_s H(T,s)    $$
	where the minimum is subjected to the constraint (\ref{consG}). 
For a given tree $T$ this is a strictly convex function of $s$. There is a unique global minimizer under the constraint (\ref{consG}). Since $\alpha >0$ this minimizer is obtained at positive $s$ for any edge $e$. Let $\lambda$ be the Lagrange multiplied due to the constraint (\ref{consG}). Then
$$-\alpha w_\pi(e) s^{-\alpha-1}(e)+ \lambda=0$$
for any $e\in T$, thus $s(e)=(\lambda/\alpha)^{-1/(\alpha+1)}
w_\pi(\alpha)^{1/1+\alpha}$.   Substituting this in the constraint (which must holds with equality) implies
$$ \left(\frac{\lambda}{\alpha}\right)^{\alpha/ (\alpha+1)}
=\left(\sum_{e\in T}|e|w_\pi(e)^{1/(\alpha+1)}\right)^\alpha  \ . $$

 Substitute $s(e)$  in $H$ we get $H(T)=(\alpha/\lambda)^{\alpha/ (\alpha+1)}\sum_{e\in T}w_\pi(e)^{1/(1+\alpha)}|e|$. Thus
 $$ H(T)=\left(\sum_{e\in T}|e|w_\pi(e)^{1/(\alpha+1)}\right)^{1+\alpha} \  $$
 which is the $1+\alpha$ power of the Gilbert-mailing cost $G(T,\pi)$ under $\sigma=1/(\alpha+1)$. 
\end{proof}
\section{Conditional Wasserstein metric}
We consider now $\nu_\pm$ as a  pair of  Borel probability measures on $\R^d$. 
Recall the Wasserstein metric  on the set of probability Borel measures ${\cal P}(\R^d)$
 $$W_p(\nu_+,\nu_-)=\left( \min_{\pi\in\Pi(nu_+,\nu_-)} \int\int |x-y|^p\pi(dxdy)\right)^{1/p}$$
 where $\Pi(\nu_+, \nu_-)$ is the continuum version of (\ref{balgpm})
\be\label{Pi} \Pi(\nu_+,\nu_-)= \left\{ \pi \in {\cal P}(\R^d\times \R^d); \int_{y\in \R^d} \pi(dx,dy)=\nu_+(dx), \ \ \  \int_{x\in \R^d} \pi(dx,dy)=\nu_-(dy)\right\}\ee
 It is known that $W_p$ is a metric on the set of Borel probability measures on $\R^d$ having a finite $p-$ moment, if $p\geq 1$ (see, e.g.\cite{[GM]}). Let us define the {\em conditional Wasserstein}  $p-$ metric  $W_p(\nu_+, \nu_-\|\mu)$, where $\mu\in {\cal P}(\R^d)$:
 	$$ W_p(\nu_+, \nu_-\|\mu):= \lim_{\eps\rightarrow 0}
 	\eps^{-1}W_p(\mu+\eps \nu_+; \mu+\eps \nu_-) \ . $$
	\begin{theorem}\cite{W2, W3}\label{firstT}
	$$	W_p(\nu_+, \nu_-\|\mu)= \sup\left\{\int \phi d (\nu_+-\nu_-) , \ \ \phi\in C^1(\R^d), \ \ \ , \ \int
 			|\nabla\phi|^{p^{'}} d\mu\leq 1\right\} \  $$ 
 			where $p^{'}= \frac{p}{p-1}$. In particular, 
 			$$ 	\mu\rightarrow  W^p_p(\nu_+, \nu_-\|\mu)\equiv p\sup_{\phi\in C^1}\int \phi d (\nu_+-\nu_-)-\frac{p}{p^{'}}\int |\nabla\phi|^{p^{'}}d\mu $$
			is a convex function in $\mu\in {\cal P}$ for fixed $\nu_\pm$. 
 	\end{theorem}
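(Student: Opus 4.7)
The plan is to base the argument on the Benamou--Brenier dynamical reformulation of the Wasserstein distance, which expresses $W_p^p(\mu_0,\mu_1)$ as the infimum of $\int_0^1\!\int|v_t|^p\,d\mu_t\,dt$ over vector fields $v_t$ and curves $\mu_t$ of probability measures joining $\mu_0$ to $\mu_1$ via the continuity equation $\partial_t\mu_t+\nabla\cdot(\mu_t v_t)=0$. Applying this to endpoints $\mu+\epsilon\nu_+$ and $\mu+\epsilon\nu_-$, the natural ansatz is $\mu_t^\epsilon=\mu+\epsilon\rho_t$ and $v_t=\epsilon u_t$; substituting into the continuity equation and expanding in $\epsilon$ yields, at leading order, the linearised constraint $\partial_t\rho_t+\nabla\cdot(\mu u_t)=0$ with $\rho_0=\nu_+$, $\rho_1=\nu_-$, while the action becomes $\epsilon^p\!\int_0^1\!\int|u_t|^p\,d\mu\,dt+O(\epsilon^{p+1})$. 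Dividing by $\epsilon^p$ and letting $\epsilon\to 0$ thus identifies $W_p^p(\nu_+,\nu_-\|\mu)$ with the infimum of the leading term subject to the linearised constraint.

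I would then reduce this space--time problem to a purely static one. Set $\bar u:=\int_0^1 u_t\,dt$; integrating the linearised continuity equation from $t=0$ to $t=1$ gives $\nabla\cdot(\mu\bar u)=\nu_+-\nu_-$, and Jensen's inequality for the convex map $u\mapsto|u|^p$ yields $\int_0^1\!\int|u_t|^p\,d\mu\,dt\ge\int|\bar u|^p\,d\mu$ with equality iff $u_t$ is time-independent. Hence
\[
W_p^p(\nu_+,\nu_-\|\mu)=\min_u\Bigl\{\int|u|^p\,d\mu:\ \nabla\cdot(\mu u)=\nu_+-\nu_-\Bigr\}.
\]

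To pass to the stated dual formula, I would use H\"older duality. For any admissible $u$ and any $\phi\in C^1$, integration by parts gives $\int\phi\,d(\nu_+-\nu_-)=-\int\nabla\phi\cdot u\,d\mu$, and H\"older bounds this by $\|\nabla\phi\|_{L^{p'}(\mu)}\|u\|_{L^p(\mu)}$. Restricting to $\|\nabla\phi\|_{L^{p'}(\mu)}\le 1$, taking the supremum on the left and the infimum (over admissible $u$) on the right yields the easy direction. For the sharp direction, the equality case in H\"older forces $u=-c|\nabla\phi|^{p'-2}\nabla\phi$ pointwise, which when substituted in the divergence constraint produces the weighted $p'$-Laplace equation $\nabla\cdot(\mu|\nabla\phi|^{p'-2}\nabla\phi)\propto\nu_--\nu_+$. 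Solvability is obtained variationally, by minimising $\tfrac{1}{p'}\int|\nabla\phi|^{p'}d\mu$ under the linear side-condition $\int\phi\,d(\nu_+-\nu_-)=1$; the resulting extremal realises equality. A one-parameter Legendre transform on the scaling $\phi\mapsto t\phi$ finally converts $W_p=\sup\{\int\phi\,d(\nu_+-\nu_-):\|\nabla\phi\|_{L^{p'}(\mu)}\le 1\}$ into the unconstrained form
\[
\frac{W_p^p(\nu_+,\nu_-\|\mu)}{p}=\sup_{\phi\in C^1}\Bigl\{\int\phi\,d(\nu_+-\nu_-)-\frac{1}{p'}\int|\nabla\phi|^{p'}\,d\mu\Bigr\}.
\]

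Convexity in $\mu$ is then immediate: for each fixed $\phi$, the bracketed expression is affine in $\mu$ (the first term is independent of $\mu$, the second is linear in $\mu$), so $\mu\mapsto W_p^p(\nu_+,\nu_-\|\mu)$ is a supremum of affine functions and hence convex. The main technical obstacle, I expect, is making the linearisation step fully rigorous without regularity on $\mu$ and $\nu_\pm$: one must justify the interchange of $\lim_{\epsilon\to 0}$ with the Benamou--Brenier infimum (both $\Gamma$-$\liminf$ and recovery-sequence bounds), control the $O(\epsilon^{p+1})$ remainder uniformly in admissible competitors, and accept possibly weak solutions to the $p'$-Laplace equation in an appropriate weighted Sobolev space. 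By comparison the H\"older/duality and convexity steps are essentially formal.
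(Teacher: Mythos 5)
The paper does not actually prove this theorem: it is imported verbatim from \cite{W2, W3}, so there is no in-paper argument to measure you against. Your route --- linearising the Benamou--Brenier action around $\mu$ to obtain the static problem $\min\{\int|u|^p d\mu:\ \nabla\cdot(\mu u)=\nu_+-\nu_-\}$, then passing to the dual constrained supremum by H\"older/convex duality and to the unconstrained form by optimising the scaling $\phi\mapsto t\phi$ --- is a genuinely different derivation from the one in the cited sources, which work with the static Kantorovich dual of $W_p(\mu+\eps\nu_+,\mu+\eps\nu_-)$ and limit theorems for the potentials rather than with the dynamical formulation. What your approach buys is transparency: the identification of $W_p(\cdot,\cdot\|\mu)$ as a weighted homogeneous $\dot W^{-1,p}(\mu)$-type norm and the appearance of the weighted $p'$-Laplace equation are immediate, and the final convexity claim (the only part of the theorem this paper actually uses downstream) drops out cleanly and correctly as a supremum of functionals affine in $\mu$. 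What it costs is exactly the obstacle you flag, and it is worth stressing that it is not a peripheral one here: the paper applies the theorem with $\nu_+=\delta_x$, $\nu_-=\delta_y$, where $\mu+\eps\delta_x$ is singular relative to $\mu$, the value $W_p^p(x,y\|\mu)$ may be $+\infty$, and the recovery-sequence construction for the Benamou--Brenier linearisation (which typically requires $\rho_t\ll\mu$ to define the velocity $v_t=\eps\,\mu u/(\mu+\eps\rho_t)$) degenerates. So your argument is a clean formal derivation plus a correct and complete proof of the convexity statement, but the first identity, in the generality in which the paper uses it, still rests on the external references rather than on your sketch.
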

 Let us now substitute $\delta_x$ for $\nu_+$ and $\delta_y$ for $\nu_-$.
 \be\label{Wpineq} W_p(x,y\|\mu):= W_p(\delta_x, \delta_y\|\mu)\equiv \sup_{\phi\in C^1, \phi\not\equiv 0}\frac{\phi(x)-\phi(y)}{\left(\int |\nabla\phi|^{p^{'}}d\mu\right)^{1/p^{'}} }\ . \ee
 It follows that 
 \begin{lemma}
$\mu\rightarrow W_p^p(x,y\|\mu)\in R_+\cup\{\infty\}$
 is a convex function in $\mu\in {\cal P}$ for any $x,y\in \R^d$. In addition, $W_p(\cdot, \cdot\|\mu)$ is a an extended metric on $\R^d$ (i.e. it may attains infinite values), for fixed $\mu\in{\cal P}$. 
 \end{lemma}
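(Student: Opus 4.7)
The plan is to extract both assertions directly from the dual representation in Theorem \ref{firstT}. Specialising that formula to $\nu_+ = \delta_x$ and $\nu_- = \delta_y$ gives
$$W_p^p(x,y\|\mu) = p\sup_{\phi\in C^1}\left\{\phi(x) - \phi(y) - \frac{1}{p'}\int|\nabla\phi|^{p'}\,d\mu\right\}.$$
For each fixed test function $\phi$ the expression inside the braces is \emph{affine} in $\mu$: the term $\phi(x)-\phi(y)$ is constant and the term $\int|\nabla\phi|^{p'}d\mu$ is linear in $\mu$. A pointwise supremum of affine functions is convex (possibly attaining $+\infty$), so the convexity of $\mu\mapsto W_p^p(x,y\|\mu)$ on ${\cal P}$ follows at once; convexity is preserved when one restricts from the affine space of signed measures to the convex subset ${\cal P}$.

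For the extended-metric properties at fixed $\mu$, I would read directly from the ratio form (\ref{Wpineq}). Non-negativity is automatic: replacing $\phi$ by $-\phi$ leaves the denominator unchanged while flipping the sign of the numerator, so the supremum always picks up the non-negative sign. The same substitution gives the symmetry $W_p(x,y\|\mu) = W_p(y,x\|\mu)$, and $W_p(x,x\|\mu)=0$ because every admissible $\phi$ satisfies $\phi(x)-\phi(x)=0$. For the triangle inequality I would use the identity $\phi(x)-\phi(z) = (\phi(x)-\phi(y)) + (\phi(y)-\phi(z))$, divide by the common denominator $(\int|\nabla\phi|^{p'}d\mu)^{1/p'}$, bound each of the two summands by $W_p(x,y\|\mu)$ and $W_p(y,z\|\mu)$ respectively, and then take the supremum over $\phi$ on the left.

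The only step requiring a moment of care is the separation axiom $W_p(x,y\|\mu)=0 \Rightarrow x=y$. For this I would assume $x\neq y$ and exhibit a smooth compactly supported $\phi$ with $\phi(x)=1$ and $\phi(y)=0$. If $\int|\nabla\phi|^{p'}d\mu > 0$ then the ratio in (\ref{Wpineq}) is a strictly positive finite number; if $\int|\nabla\phi|^{p'}d\mu = 0$ then the ratio is interpreted as $+\infty$. Either way the supremum is strictly positive, contradicting $W_p(x,y\|\mu)=0$. This dichotomy is precisely where the \emph{extended} qualifier enters: when $\mu$ is too concentrated to control gradient integrals between $x$ and $y$, the value $+\infty$ is unavoidable, and this is the one genuine subtlety of the argument.
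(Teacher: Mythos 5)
Your proof is correct and follows the same route the paper intends: the paper states this lemma as an immediate consequence of Theorem \ref{firstT} and the ratio formula (\ref{Wpineq}) without writing out details, and your argument---convexity as a pointwise supremum of functions affine in $\mu$, plus verification of the metric axioms (symmetry, triangle inequality via splitting $\phi(x)-\phi(z)$, and separation via a bump function, with the $+\infty$ convention when the gradient integral vanishes) directly from (\ref{Wpineq})---is precisely the omitted verification. No gaps.
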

\section{Relaxation of the Gilbert-mailing problem}
Let now $T$ be a tree in class $(A,B,s)$. We associated with this tree a probability  measure $\mu_T$ which is supported on this tree and such that $\mu_T(e)=|e|s(e)$ for any $e\in T$. 
\begin{lemma}\label{lemma1}
	If $x\in A, y\in B$ then $W_p^p(x,y\|\mu_T)\geq \sum_{e\in o_T(x,y)}|e|s^{1-p}(e)$. The equality holds iff $\mu_T$ is a uniform measure on each edge $e$.
	\end{lemma}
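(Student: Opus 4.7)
Invoke the dual characterization (\ref{Wpineq}): after raising to the $p$-th power,
\[
W_p^p(x,y\|\mu_T)=\sup_{\phi\in C^1,\ \phi\not\equiv 0}\frac{(\phi(x)-\phi(y))^p}{\left(\int|\nabla\phi|^{p'}d\mu_T\right)^{p/p'}},
\]
and evaluate the supremum on a carefully chosen test function. For the lower bound, I will construct $\phi$ whose tangential derivative on each edge $e$ of the orbit $o:=o_T(x,y)$ equals the constant $g(e):=s(e)^{-(p-1)}$, oriented from $y$ to $x$, and which is constant along each branch of $T\setminus o$. This defines a continuous piecewise-affine $\psi$ on $T$; promote it to a $C^1$ function $\phi$ on $\R^d$ by making $\phi$ normal-constant in a thin tubular neighborhood of $T$ and rounding the corners at interior vertices by mollification. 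Using the identity $(p-1)p'=p$, direct computation gives $\phi(x)-\phi(y)=\sum_{e\in o}g(e)|e|=\sum_{e\in o}|e|s(e)^{1-p}$ and $\int|\nabla\phi|^{p'}d\mu_T=\sum_{e\in o}g(e)^{p'}\mu_T(e)=\sum_{e\in o}|e|s(e)^{1-p}$, so the $p$-th power of the Rayleigh ratio equals exactly $\sum_{e\in o}|e|s(e)^{1-p}$, establishing the asserted inequality.

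For the equality statement I will produce a matching upper bound. Writing $\rho$ for the density of $\mu_T$ with respect to arc length along $o$ and using $\phi(x)-\phi(y)\leq\int_o|\nabla\phi|\,ds$ followed by H\"older applied to the factorization $|\nabla\phi|=(|\nabla\phi|\rho^{1/p'})\cdot\rho^{-1/p'}$, one obtains
\[
(\phi(x)-\phi(y))^p\leq\left(\int_o|\nabla\phi|^{p'}\rho\,ds\right)^{p-1}\int_o\rho^{1-p}\,ds\leq\left(\int_T|\nabla\phi|^{p'}d\mu_T\right)^{p-1}\int_o\rho^{1-p}\,ds,
\]
so $W_p^p(x,y\|\mu_T)\leq\int_o\rho^{1-p}\,ds$. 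Since $t\mapsto t^{1-p}$ is strictly convex for $p>1$, Jensen on each edge gives $\int_0^{|e|}\rho^{1-p}\,ds\geq |e|s(e)^{1-p}$ with equality iff $\rho$ is constant on $e$. When $\mu_T$ is uniform on every edge of $o$ these two bounds match and equality holds; otherwise a refined test function with tangential slope proportional to $\rho^{1-p}$ on each edge (again smoothed to $C^1$) attains the value $\sum_{e\in o}\int_0^{|e|}\rho^{1-p}\,ds>\sum_{e\in o}|e|s(e)^{1-p}$, so the inequality becomes strict.

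The main obstacle is turning the naturally piecewise-affine $\psi$ on the 1-dimensional set $T$ into a genuine $C^1$ function on $\R^d$, as demanded by Theorem \ref{firstT}. Because $\mu_T$ is concentrated on $T$, mollification inside a shrinking tubular neighborhood changes $\phi(x)$, $\phi(y)$ and $\int|\nabla\phi|^{p'}d\mu_T$ by an arbitrarily small amount, and the claimed ratio is recovered in the limit. The remaining exponent bookkeeping ($(p-1)p'=p$ and $p/p'=p-1$) is routine.
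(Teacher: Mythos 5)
Your argument is correct and follows essentially the same route as the paper's proof: the duality formula (\ref{Wpineq}), test functions that are piecewise affine along the orbit and constant on the subtrees hanging off it, H\"older along the arc-length parameterization to get the upper bound $W_p^p(x,y\|\mu_T)\leq\int_o\rho^{1-p}$, and Jensen on each edge for the equality characterization. The only (harmless) difference is organizational: you obtain the inequality directly from the explicit test function with slope $s(e)^{1-p}$ per edge, whereas the paper first establishes the exact identity $W_p^p(x,y\|\mu_T)=\int_0^{|o_T(x,y)|}\rho^{1-p}$ and then applies Jensen; your treatment of the $C^1$ extension via tubular neighborhoods and mollification is, if anything, slightly more careful than the paper's.
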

\begin{proof}
Let $\cup_{i=1}^l e_i=o_T(x,y)$, $|o_T(x,y)|:=\sum_{e\in o_T(x,y)}|e|$. 
 $q:[0,|o_T(x,y)|]\rightarrow 0_T(x,y)$ be an arc-length  parameterization of the orbit, and $\rho$ a density of a positive measure on $[0,|o_T(x,y)|]$ such that $q_\#(\rho d\tau) = \mu_T\lfloor_{o_T(x,y)}$. 
that is, for any test function $\phi\in C_b(\R^d)$:
$$ \int_{o_T(x,y)} \phi d\mu_T=\int_0^{|o^T(x,y)|}\phi(q(\tau))\rho(\tau) d\tau\ .  $$
Let 
$\tau_i=q^{-1}(e_i\cap e_{i+1})$ for $i=1\ldots l-1$. We set $\tau_0=q^{-1}(x)=0$ and $\tau_l=q^{-1}(y)=|o_T(x,y)|$. Since $q$ is an arc-length parameterization and  
$\mu_T(e_i)= |e_i|s(e_i)$,  we get \be\label{consrho}\int_{\tau_i}^{\tau_{i+1}}\rho d\tau= |e_i|s(e_i) \ \text{and} \ \ \tau_{i+1}-\tau_i=|e_i| \ .   \ee

 Let $\phi\in C_b^1(\R^d)$ and  $\psi(\tau)=\phi(q(\tau))$.  Then
$$ \int_{o_T(x,y)}|\nabla\phi|^{p^{'}}d\mu_T \geq \int_0^{|o_T(x,y)|}|\psi^{'}|^{p^{'}}\rho(\tau)d\tau \ , \ \ \ \phi(y)-\phi(x)=\psi(|o_T(x,y)|)-\psi(0) \ . $$
A direct calculation yields
$$ \int_0^{|o_T(x,y)|}\rho|\psi^{'}|^{p^{'}}d\tau \geq |\psi_i(|o_T(x,y)|)-\psi_i(0)|^{p^{'}}\left(\int_0^{|o_T(x,y)|}\rho^{1-p}\right)^{1/(1-p)} \  , $$
hence (since $p=p^{'}/(p^{'}-1)$)
$$\left(\frac{\psi({|o_T(x,y)|})-\psi(0)}{\int_0^{|o_T(x,y)|} \rho|\psi^{'}|^{p^{'}}}\right)^p\leq \int_0^{|o_T(x,y)|}\rho^{1-p} $$
holds for any smooth function $\psi$ on $[0,1]$. Moreover, we can find a sequence of such smooth functions $\psi_n$ for which the limit above holds with an equality, or the left hand side blows to infinity, if the integral on the right hand side diverges.  Each such smooth $\psi$ can be lifted to a Lipschitz function on  the orbit $o_T(x,y)$ via 
$\psi\circ q^{-1}$, and we can extend $\psi\circ q^{-1}$ to  a function $\phi$ the entire tree $T$ such that $\phi$ is a constant $=\psi(\tau_i)$ on each sub-tree rooted at $e_{i+1}\cap e_i$. Since all these sub-tree are disjoint, we obtain from $\psi$  a Lipschitz function  $\phi$ which can be extended to $\R^d$ such that
$$\left(\frac{\phi(x)-\phi(y)}{\int_{\R^d}|\nabla\phi|^{p^{'}}d\mu}\right)^p\leq \int_0^{|o_T(x,y)|}\rho^{1-p} \ .  $$
Since the supremum on the left yields the equality it follows by (\ref{Wpineq}) that $W_p^p(x,y)\|\mu_T)= \int_0^{|o_T(x,y)|}\rho^{1-p}$. 

By (\ref{consrho}) and the Jensen's inequality  it follows that
$$ \int_0^{|o_T(x,y)|}\rho^{1-p} \geq \sum_{e\in o_T(x,y) }|e|s^{1-p}(e) $$ and the equality is attained at $\rho(\tau)= s(e)$ on $\tau\in (\tau_i, \tau_{i+1})$ (i.e $\mu_T$ is a the uniform measure $s(e)$ on each edge $e\in o_T(x,y)$). 

\end{proof}
The convexified Gilbert-mailing problem for a given $\sigma\in(0,1)$ takes the form:
\begin{tcolorbox}
Minimize $H_p(\mu):= \sum_{x\in A}\sum_{y\in B} \pi(x,y) W_p^p(x,y\|\mu)$, over $\mu\in {\cal P}$, where $p=1/\sigma$. 
	\end{tcolorbox}
Since $H_p$ is a convex functional on ${\cal P}$ we obtained 
\begin{tcolorbox}
\begin{theorem}
	There exists a unique solution to the Gilbert-mailing problem, supported in a finite graph. If the support is a tree then this tree is a solution of the mailing Gilbert problem. 
\end{theorem}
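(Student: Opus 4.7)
The plan is to combine the direct method of the calculus of variations with the structural information in Lemma \ref{lemma1} and the equivalence between Problem \ref{congil} and the discrete Gilbert mailing problem. The argument splits naturally into three stages: existence of a minimizer via convexity and lower semicontinuity; identification of its support as a finite graph; and, when the support is a tree, a direct comparison between $H_p$ and the Gilbert mailing cost.

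For existence, I would observe that by Theorem \ref{firstT} each $W_p^p(x,y\|\mu)$ is the supremum over smooth $\phi$ of a weak-$*$ continuous affine functional of $\mu$, so $H_p$ is convex and weak-$*$ lower semicontinuous. Minimizing sequences can be taken with support in a fixed compact set $K\supset A\cup B$: mass placed far from $K$ does not decrease the denominator in \eqref{Wpineq} for any test function whose gradient is concentrated near $A\cup B$, and can therefore be redistributed onto $K$ without raising $H_p$. Weak-$*$ compactness of $\mathcal{P}(K)$ together with lower semicontinuity then yields a minimizer $\mu^*$.

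Next, I would show that $\mathrm{supp}(\mu^*)$ is contained in a finite graph by means of a slicing argument modelled on the proof of Lemma \ref{lemma1}. For each pair $(x,y)$ with $\pi(x,y)>0$, only the $\mu^*$-mass lying on a rectifiable curve from $x$ to $y$ can contribute to $W_p^p(x,y\|\mu^*)$; mass off every such curve can be transported onto the best one, strictly decreasing $H_p$. Finiteness of $\mathrm{supp}(\pi)$ then forces $\mathrm{supp}(\mu^*)$ into a finite union of paths. Assume now this graph is a tree $T$. For every pair $(x,y)$ with $\pi(x,y)>0$ the orbit $o_T(x,y)$ is unique, and the equality case of Lemma \ref{lemma1} pins $\mu^*$ to be uniform on each edge with some density $s(e)$ subject to $\sum_e |e|s(e)=1$. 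Substituting,
\[
H_p(\mu_T)=\sum_{x,y}\pi(x,y)\sum_{e\in o_T(x,y)}|e|s(e)^{1-p}=\sum_{e\in T}w_\pi(e)|e|s(e)^{-\alpha}=H(T,s),
\]
with $\alpha=p-1$. The Lagrange multiplier computation carried out in the equivalence lemma of Section 2 shows that $\min_s H(T,s)=G(T,\pi)^{p}$. The analogous construction applied to any competing tree $T'$ in class $(A,B,\pi)$ yields an admissible $\mu_{T'}$ with $H_p(\mu_{T'})=G(T',\pi)^{p}$, so the optimality of $\mu^*$ forces $G(T,\pi)\le G(T',\pi)$, proving that $T$ solves Problem \ref{gilnocon}.

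The step I expect to be hardest is uniqueness. Convexity makes the set of minimizers convex, but the sup-of-affine structure does not automatically give strict convexity. I would try to upgrade this by first showing that the optimal support is unique as a set, exploiting the strict convexity of $a\mapsto a^{-1/p'}$ applied to a test function that separates two hypothetical distinct optimal supports, and then invoking the equality case of Lemma \ref{lemma1} together with the probability constraint to fix the edge weights uniquely. This capacity-tailored strict-convexity step is the delicate part and may require an auxiliary regularity statement for the optimal test functions $\phi$.
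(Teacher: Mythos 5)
Your existence argument and the tree-comparison step are sound and in fact go well beyond what the paper supplies: the paper's entire ``proof'' is the single remark that $H_p$ is convex on $\mathcal{P}$, followed by the boxed statement of the theorem. Your computation $H_p(\mu_T)=\sum_{e\in T}w_\pi(e)|e|s(e)^{1-p}=H(T,s)$ with $\alpha=p-1$, combined with the equality case of Lemma~\ref{lemma1} and the Lagrange-multiplier identity $\min_s H(T,s)=G(T,\pi)^{p}$, is exactly the mechanism the paper intends (via Problem~\ref{congil}) for deducing that an optimal tree-supported $\mu^*$ yields a Gilbert-optimal tree, and you execute it correctly.

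However, two genuine gaps remain, and they are gaps the paper itself does not close. First, the claim that the minimizer is supported in a \emph{finite graph} is a structural regularity statement; your slicing sketch (``mass off every such curve can be transported onto the best one, strictly decreasing $H_p$'') uses the monotonicity of $\mu\mapsto W_p^p(x,y\|\mu)$ but does not show that the optimal support consists of finitely many segments rather than, say, a countable union of rectifiable arcs or a set of positive measure; no argument in the paper supplies this either. Second, uniqueness: you are right to be suspicious. Each $W_p^p(x,y\|\cdot)$ is a supremum of affine functionals of $\mu$, hence convex but not strictly convex --- perturbing $\mu$ on a set where the optimal test functions have vanishing gradient need not change $H_p$ --- so the paper's stated justification (``since $H_p$ is convex'') establishes only that the set of minimizers is convex, not that it is a singleton. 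Your proposed upgrade via the strict convexity of $a\mapsto a^{-1/p'}$ is not carried out and, as you concede, would need a regularity statement for the optimal $\phi$ that neither you nor the paper provides. In short: your proposal faithfully reconstructs (and improves on) the intended argument for the tree case, but the ``unique'' and ``finite graph'' clauses of the theorem remain unproven in both your write-up and the paper.
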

\end{tcolorbox}
\begin{remark}
The possibility of closed cycles cannot be excluded. Suppose there are two orbits $o_1(x,y),\ldots o_k(x,y)$ connecting $x$ and $y$. Let $\mu$ be a  positive measure supported on $\cup o_i(x,y)$, $\rho_i$ being the density on $o_i(x,y)$. By a computation similar to Lemma \ref{lemma1} we 
obtain
$$ W_p^p(x,y\|\mu)=\left( \sum_{i=1}^k \left(\int_{o_i(x,y)} \rho_i^{1-p}\right)^{1/(1-p)}\right)^{1-p} \ . $$
\end{remark}
\begin{remark}
	$\sigma=0$ (the Steiner mailing case) corresponds to the limit $p=\infty$. In particular, the mailing Steiner problem  is a limit of convex optimization problem. Note that the mailing Steiner problem is equivalent to the Steiner problem itself if $\pi(x,y)>0$ for any $x\in A, y\in B$.  \end{remark}
\section{Numerical implementation and entropic relaxation of the mailing problem}
	Let us consider a finite grid $Z\subset \R^d$. Let $A:=\{x_1, \ldots x_m\}$, $B=\{y_1, \ldots y_n\}$ subsets of  $Z$.

On the grid $Z$ we assign weights $m$ in the simples  $$S(Z):= \left\{m:Z\rightarrow\R_+; \ \  m(z)\geq 0, \ \ \sum_{z\in Z} m(z)=1\right\} \ .  $$
 This is a discretization of the  probability measures $\mu$ on $Z$.

For $x_i, y_j\in A,B$,  the discrete version of $W_p^p(x_i,y_j\|\mu)$ is the supremum over $\phi\in \R^Z$ of 
$$W^p(m,\phi; x_i, y_j)=- \frac{p}{p^{'}}\sum_{z\in Z}\sum_{z^{'}\in N(z)} m(z) \frac{1}{|N(z)|}|\phi(z)-\phi(z^{'})|^{p^{'}} 
+ p\left(\phi(x_i) -\phi(y_j)\right) \  $$
(c.f. Theorem {\ref{firstT}).
Here $N(z)$ stands for the neighbors in $Z$ of the grid point $z$ and $|N(z)|$ the cardinality  of $N(z)$ (recall $x_i, y_j\in Z$ as well). 
Our object is to find
\be\label{reg} \min_{m\in S(Z)}\sum_{x_i\in A, y_j\in B} \pi(i,j)\max_{\phi\in \R^Z} W^p(m,\phi; x_i, y_j)\ee
where $\pi(i,j):= \pi(x_i, y_j)$ is the transport plan from $A$ to $B$. The support of the minimizer $m$ in $S(Z)$ is the discrete approximation of  the optimal tree solving the mailing Gilbert problem for $\sigma=1/p$. 

Let 
$$ \phi_{i,j}(m):= \arg\max W^p(m,\phi; x_i, y_j) \ . $$
The entropic regularization of (\ref{reg}) is
\be\label{regeps} \min_{m\in S(Z)}\sum_{x_i\in A}\sum_{y_j\in B} \pi(i,j)
W^p(m, \phi_{i,j}(m); x_i, y_j) +(\eps p/p^{'})\sum_{z\in Z} m(z)\ln (m(z)) \ . \ee

The minimal $m\in S(Z)$ is 
\be\label{mdef}m(z)= \frac{\exp\left(\sum_{i,j}\pi(i,j)D \phi_{i,j}(z)/2\eps\right)}{\sum_{z^{'}\in Z} \exp\left(\sum_{i,j}D \phi_{i,j}(z^{'})/2\eps\right)}\ee
where 
\begin{equation}\label{DeltaU} D \phi(z):= \sum_{z^{'}\in N(z)}  \frac{1}{|N(z)|}|\phi(z)-\phi(z^{'})|^{p^{'}}
\end{equation}
Substitute this in (\ref{regeps}) and using MinMax Theorem  we obtain the equivanent problem: Maximize over $\{\phi\}\in \R^{Z\times A\times B}$:
$$  H^\eps(\{\phi\})=-\eps\ln\left(\sum_{z\in Z}\exp\left(\sum_{i,j}\pi(i,j)D\phi_{i,j}(z)/2\eps
\right)
\right)+ p\sum_{i,j} \pi(i,j)\left(\phi_{i,j}(x_i) -\phi_{i,j}(y_j)\right) \ . 
$$
where $D \phi$ defined by (\ref{DeltaU}).

\end{document}